\newtheorem{thm}{Theorem}[section]
\newtheorem{lem}{Lemma}[section] 
\newtheorem{cor}[thm]{Corollary}
\theoremstyle{definition}
\newtheorem*{que}{Problem}
\numberwithin{equation}{section}  
\theoremstyle{remark}
\newtheorem*{remark}{Remark}
\newcommand{\q}{\quad} 
\newcommand{\qq}{\qquad} 
\newcommand{\qbox}[1]{\q \mbox{#1} \q} 
\newcommand{\qqbox}[1]{\qq \mbox{#1} \qq}
\newcommand{\N}{\mathbb N} 
\newcommand{\R}{\mathbb R} 
\newcommand{\set}[1]{\left\{ #1  \right\}}
\newcommand{\fr}[2]{\frac{#1}{#2}}
\newcommand{\Vinf}[1]{\Vert{#1}\Vert_\infty}
\newcommand{\K}{K}
\newcommand{\SCK}{S(C_0(K))^+}
\newcommand{\SCX}{S(C_0(X))^+}
\newcommand{\SCY}{S(C_0(Y))^+}
\newcommand{\CK}{C_0(\K)}
\newcommand{\CX}{C_0(\X)}
\newcommand{\CY}{C_0(\Y)}
\newcommand{\DK}{D_K}
\newcommand{\DX}{D_{\X}}
\newcommand{\DY}{D_{Y}}
\newcommand{\ZK}[1]{Z_\K({#1})}
\newcommand{\ZX}[1]{Z_\X({#1})}
\newcommand{\ZY}[1]{Z_\Y({#1})}
\newcommand{\MK}[1]{M_\K({#1})}
\newcommand{\MX}[1]{M_\X({#1})}
\newcommand{\MY}[1]{M_\Y({#1})}
\newcommand{\PX}[1]{P_\X({#1})}
\newcommand{\PY}[1]{P_\Y({#1})}
\newcommand{\PK}[1]{P_\K({#1})}
\newcommand{\Phii}{\Phi^{-1}}
\newcommand{\Phit}{\tilde{\Phi}}
\newcommand{\tz}{t_0}
\newcommand{\sii}{\si^{-1}}
\newcommand{\si}{\sigma}
\newcommand{\xo}{x_1}
\newcommand{\xz}{x_0}
\newcommand{\yz}{y_0}
\newcommand{\yo}{y_1}
\newcommand{\X}{X}
\newcommand{\Y}{Y}
\begin{document}

\title[A variant of Tingley's problem
for positive unit spheres]
{A variant of Tingley's problem
for positive unit spheres of
continuous functions that vanish at infinity}

\author[K. Ezumi]{Kazuki Ezumi}
\address[K. Ezumi]
{Department of Mathematics,
Faculty of Science,
Niigata University,
Niigata 950-2181, Japan}
\email{s22s114j@mail.cc.niigata-u.ac.jp}

\author[M. R. Lin]{Min-Ruei Lin}
\address[M. R. Lin]
{Graduate School of Science and Technology, Niigata University, Niigata 950-2181, Japan;
And
Department of Applied Mathematics,
National Sun Yat-sen University,
Kaohsiung, 80424, Taiwan}
\email{m082030021@student.nsysu.edu.tw}

\author[T. Miura]{Takeshi Miura}
\address[T. Miura]
{Department of Mathematics,
Faculty of Science,
Niigata University,
Niigata 950-2181, Japan}
\email{miura@math.sc.niigata-u.ac.jp}

\begin{abstract}
Let $\SCX$ and $\SCY$ denote the positive parts of the unit spheres of $\CX$ and $\CY$,
where $\X$ and $\Y$ are locally compact
Hausdorff spaces. 
We prove that every surjective isometry
from $\SCX$ onto $\SCY$ is a composition
operator induced by a homeomorphism
between $\X$ and $\Y$. 
As a consequence, such a map extends
to a surjective real-linear isometry from
$\CX$ onto $\CY$.
We also characterize surjective
phase-isometries on the positive unit sphere.
\end{abstract}

\maketitle

\section{Introduction}

Let $E$ and $F$ be Banach spaces
with unit spheres $S(E)$ and $S(F)$, respectively.  
In 1987, Tingley \cite{ting} asked whether
every surjective isometry between $S(E)$ and $S(F)$
extends to a surjective isometry between
the entire spaces $E$ and $F$. 
This problem, now known as Tingley's problem,
has attracted considerable attention
in the context of function spaces
(see, for example,
\cite{cue1, cue2, hat1, hat2, hat3,
hiro2, wang1, wang2, wang4}).

When $E$ is an ordered Banach space, we denote
\[
S(E)^+=\{a\in S(E):a\geq 0\},
\]
and call $S(E)^+$ the \textit{positive unit sphere}
of $E$.  
Peralta \cite{pera} proposed a variant of
Tingley's problem for positive unit spheres
of certain Banach spaces.  
Later, Leung, Ng and Wong \cite{leun}
generalized Peralta's question as follows.
\begin{que}
Let $\Phi\colon S(E)^+\to S(F)^+$ be a
surjective isometry between the positive
unit spheres of ordered Banach spaces
$E$ and $F$, that is,
\[
\|\Phi(a)-\Phi(b)\|
=\|a-b\|\qq(a,b\in S(E)^+).
\]
Does $\Phi$ extend to a surjective
real-linear isometry between
the entire spaces $E$ and $F$?
\end{que}
Leung, Ng and Wong \cite{leun} solved
this problem affirmatively for $L^p$ spaces
with $1\leq p\leq \infty$, as well as for
$C(K)$ spaces of all continuous functions
on a compact Hausdorff space $K$, in which the authors assume the algebras are unital.
In the argument for $C(K)$, they use the constant function $\mathds{1}$ to construct strictly positive functions.
However, in $\CX$ with $\X$ locally compact Hausdorff but not compact, the absence of $\mathds{1}$ prevents a direct adaptation of their argument.

In this paper, we prove that the problem also has a positive answer for the non-unital case, that is, when $E=\CX$ and $F=\CY$, the spaces of continuous real-valued functions on locally compact Hausdorff spaces $X$ and $Y$ that vanish at infinity, respectively.
More precisely, we show that any such isometry $\Phi$ is represented as a composition operator induced by a homeomorphism $\sigma$ between the underlying spaces $X, Y$.

Our proof consists of several steps.
The main task is to define a suitable map
$\si$ between the underlying spaces.  
We first establish that inclusions of
maximum sets are preserved by $\Phi$,
and then show that a family of peak sets
has a non-empty intersection.
This property enables us to construct
$\si$ and to verify that $\Phi$ is indeed
a composition operator induced by $\si$.

The paper is organized as follows. 
In Section~\ref{sect2} we collect
basic notation and definitions
that will be used throughout the paper. 
Section~\ref{sect3} states the main theorem
and provides its proof, together with
the necessary lemmas.
Section~\ref{sect4} contains an application
to surjective phase-isometries
on the positive unit sphere.

\section{Preliminaries}\label{sect2}

Let $\CK$ be the real Banach space
of all continuous real-valued functions
on a locally compact Hausdorff
space $K$ that vanish at infinity,
equipped with the supremum norm
$\Vinf{\cdot}$.
We denote
\[
\SCK
=\set{f\in\CK:\Vinf{f}=1, f\geq 0},
\]
the positive part of the unit sphere of $\CK$, and call it \textit{the positive unit sphere of $\CK$}.
For each $f\in\SCK $, we set
\[
\MK{f} :=\set{t\in\K:f(t)=1},\qq
\ZK{f} :=\set{t\in\K:f(t)=0},
\]
which we call the maximum set and zero set
of $f$, respectively.
Following Leung, Ng and Wong~\cite{leun}, we define for any $f\in\SCK$ the set
\[
 \DK(f) :=\set{h\in\SCK:\Vinf{h-f}<1}.
\]
This set will play a key role in our analysis,
and will be studied further
in Lemma~\ref{lem3.1}.
Note that $ \DK(f)\neq\emptyset$,
as $f\in\DK(f)$.

\section{Main results}\label{sect3}

This section states and proves
the main result.

\begin{thm}\label{thm3.1}
Let $\X$ and $\Y$ be
locally compact Hausdorff spaces.
Let $\Phi\colon\SCX\to\SCY$ be a
surjective isometry between the positive unit spheres of $\CX$ and $\CY$.
Then
there exists a homeomorphism
$\si\colon\Y\to\X$ such that
$\Phi(f)=f\circ\si$ for all $f\in\SCX$.
\end{thm}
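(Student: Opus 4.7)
The plan is to extract from $\Phi$ a map $\si\colon\Y\to\X$ defined through intersections of maximum sets, to show $\si$ is a homeomorphism, and finally to verify $\Phi(f)=f\circ\si$ pointwise. The starting point is the tautology $\Phi(\DX(f))=\DY(\Phi(f))$ for every $f\in\SCX$, which holds because $\Phi$ is a bijective isometry; combined with Lemma~\ref{lem3.1}, which describes $\DK(\cdot)$ in terms of maximum and zero sets, this identity transfers the isometric action of $\Phi$ into purely set-theoretic information.

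I would first show that $\Phi$ preserves inclusions of maximum sets: $\MX{f}\subseteq\MX{g}$ implies $\MY{\Phi(f)}\subseteq\MY{\Phi(g)}$, with the converse coming from the analogous statement for $\Phii$. The natural route is to characterize $\MX{f}\subseteq\MX{g}$ intrinsically in terms of the families $\DX(f),\DX(g)$, so that the relation is automatically respected by $\Phi$.

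To construct $\si$, I would fix $y\in\Y$, set $\mathcal{F}_y=\set{f\in\SCX:y\in\MY{\Phi(f)}}$, put $K_y=\bigcap_{f\in\mathcal{F}_y}\MX{f}$, and define $\si(y)$ to be the (unique) point of $K_y$. Each $\MX{f}$ is compact since $f$ vanishes at infinity, so non-emptiness of $K_y$ reduces to the finite intersection property for $\set{\MX{f}:f\in\mathcal{F}_y}$; given $f_1,\dots,f_n\in\mathcal{F}_y$ the strategy is to build some $g\in\mathcal{F}_y$ with $\MX{g}\subseteq\bigcap_i\MX{f_i}$, using the inclusion-preservation above to certify membership in $\mathcal{F}_y$. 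Uniqueness follows by a Urysohn-type separation argument: two distinct candidates would otherwise be distinguished by a function in $\mathcal{F}_y$ whose maximum set contained only one of them. Once $\si$ is in hand, the identity $\MY{\Phi(f)}=\sii(\MX{f})$ is tautological, and comparing $f$ with auxiliary rescaled functions (for instance $\min(f,c)/c$ for $c\in(0,1)$, whose maximum set equals $\set{t\in\X:f(t)\geq c}$) upgrades this to the pointwise identity $\Phi(f)(y)=f(\si(y))$. Bijectivity of $\si$ comes from running the whole argument for $\Phii$, while continuity in both directions follows from the composition formula itself.

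The main obstacle is the finite intersection property used to construct $\si$. In the unital framework of Leung--Ng--Wong, one averages $\tfrac{1}{n}(f_1+\cdots+f_n)$ to obtain a function in $S(C(K))^+$ whose maximum set is exactly $\bigcap_i\MK{f_i}$; this shortcut relies essentially on the availability of $\mathds{1}$. In $\CX$ with $\X$ non-compact the average need not lie in $\SCX$, because its supremum may fall short of $1$, so one must engineer the required $g$ through explicit local constructions using Urysohn's lemma and compactly supported bumps, and then verify $g\in\mathcal{F}_y$ by tracking the peak structure through $\Phi$. This local-to-global passage is the technical heart of the argument.
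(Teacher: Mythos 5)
Your overall architecture is the mirror image of the paper's: the paper fixes $x\in\X$, proves $\bigcap_{f\in\PX{x}}\MY{\Phi(f)}\neq\emptyset$ by compactness and the finite intersection property, and extracts a point map from that intersection, exactly as you do with $K_y=\bigcap_{f\in\mathcal{F}_y}\MX{f}$ for $\mathcal{F}_y=\Phii(\PY{y})$. However, the step you yourself single out as ``the technical heart'' --- producing, for $f_1,\dots,f_n\in\mathcal{F}_y$, a witness $g$ certifying $\bigcap_{i}\MX{f_i}\neq\emptyset$ --- is left unresolved, and the route you propose does not work. Engineering $g$ on the $X$ side by Urysohn's lemma with $\MX{g}\subseteq\bigcap_i\MX{f_i}$ presupposes that this intersection is non-empty, which is the very thing to be proved; and there is no mechanism for ``verifying $g\in\mathcal{F}_y$ by tracking the peak structure through $\Phi$,'' since the only transfer available (the $\DX$-inclusion lemma combined with $\Phi(\DX(f))=\DY(\Phi(f))$) yields containments of maximum sets, never membership of the specific point $y$ in $\MY{\Phi(g)}$. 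Moreover, the obstruction you attribute to the absence of $\mathds{1}$ is not actually present: the functions that need to be averaged are $\Phi(f_1),\dots,\Phi(f_n)$, which all peak at the \emph{common} point $y$, so $v=\frac1n\sum_j\Phi(f_j)$ satisfies $v(y)=1$, hence lies in $\SCY$, with $\MY{v}=\bigcap_j\MY{\Phi(f_j)}$ and $\ZY{v}=\bigcap_j\ZY{\Phi(f_j)}$; pulling $v$ back through $\Phii$ and using inclusion-preservation gives $\emptyset\neq\MX{\Phii(v)}\subseteq\bigcap_i\MX{f_i}$, and $\Phii(v)\in\mathcal{F}_y$ for free. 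This is precisely the paper's Lemma~\ref{lem3.6} applied to $\Phii$: the averaging trick needs a common peak point, not the constant function $\mathds{1}$, and the common peak is available by hypothesis.

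Two further points. First, the identity $\MY{\Phi(f)}=\sii(\MX{f})$ is not ``tautological'': the implication $y\in\MY{\Phi(f)}\Rightarrow\si(y)\in\MX{f}$ is immediate from the definition of $\si(y)$ as a point of $K_y$, but the converse is exactly the \emph{equality} $\Phii(\PY{y})=\PX{\si(y)}$ rather than mere containment, which the paper obtains in Lemma~\ref{lem3.8} by a back-and-forth argument using injectivity of $\Phi$ and the separation Lemma~\ref{lem3.2}; your sketch mentions running the argument for $\Phii$ but must make this step explicit before the composition formula can be used. Second, your rescaling device for the pointwise identity can in fact be completed, and it is a genuinely different route from the paper's: since $0\leq\min(f,c)/c-f\leq 1-c$ pointwise, one has $\Vinf{\min(f,c)/c-f}\leq 1-c$, and for $y$ with $\si(y)\in\set{f\geq c}=\MX{\min(f,c)/c}$ the maximum-set correspondence gives $1-\Phi(f)(y)\leq\Vinf{\Phi(\min(f,c)/c)-\Phi(f)}\leq 1-c$, whence $\Phi(f)(y)\geq c$; symmetry with $\Phii$ and letting $c$ vary yields $\Phi(f)(y)=f(\si(y))$. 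This would replace the paper's Lemma~\ref{lem3.9}, which instead builds a peak function $u_0$ with $\Phi(f)+(1-\Phi(f)(\yz))u_0\in\PY{\yz}$ via an infinite sum of bumps. But as written your proposal omits the norm estimate that makes the rescaling argument run, so the two substantive gaps --- the finite intersection property and the unproved ``tautology'' --- must both be repaired before the proof is complete.
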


\begin{cor}\label{cor1}
Let $\Phi\colon\SCX\to\SCY$ be
a surjective isometry.
Then $\Phi$ extends uniquely
to a surjective real-linear isometry
from $\CX$ onto $\CY$.
\end{cor}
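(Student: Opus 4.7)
The plan is to obtain the extension directly from Theorem~\ref{thm3.1}. That theorem supplies a homeomorphism $\si\colon\Y\to\X$ such that $\Phi(f)=f\circ\si$ for every $f\in\SCX$, so the natural candidate for the extension is $\Phit\colon\CX\to\CY$ defined by $\Phit(f)=f\circ\si$.

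The first thing I would check is that $\Phit$ is well defined, i.e.\ that $f\circ\si\in\CY$ whenever $f\in\CX$. A homeomorphism between locally compact Hausdorff spaces is proper, so for any $\epsilon>0$ the compact set $\{x\in\X:|f(x)|\geq\epsilon\}$ has compact preimage under $\si$, which ensures $f\circ\si$ vanishes at infinity. Real-linearity of $\Phit$ is immediate from the pointwise definition, and $\Vinf{f\circ\si}=\Vinf{f}$ follows at once from surjectivity of $\si$, so $\Phit$ is a real-linear isometry. It is surjective because any $g\in\CY$ is the image of $g\circ\sii$. Finally, $\Phit|_{\SCX}=\Phi$ is precisely the content of Theorem~\ref{thm3.1}, so $\Phit$ is a surjective real-linear isometry extending $\Phi$.

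For uniqueness, suppose $T\colon\CX\to\CY$ is any real-linear map extending $\Phi$. If $f\in\CX$ satisfies $f\geq 0$ and $f\neq 0$, then $f/\Vinf{f}\in\SCX$, so real-linearity of $T$ and the hypothesis that $T$ extends $\Phi$ give
\[
T(f)=\Vinf{f}\,T\!\left(\tfrac{f}{\Vinf{f}}\right)=\Vinf{f}\,\Phi\!\left(\tfrac{f}{\Vinf{f}}\right)=\Vinf{f}\,\Phit\!\left(\tfrac{f}{\Vinf{f}}\right)=\Phit(f).
\]
Combined with $T(0)=0=\Phit(0)$, this shows $T=\Phit$ on the positive cone. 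For a general $f\in\CX$, the decomposition $f=f^+-f^-$ with $f^\pm\in\CX$ and $f^\pm\geq 0$, together with real-linearity of $T$ and $\Phit$, yields $T(f)=\Phit(f)$. The principal obstacle here is essentially absent, since the nontrivial analytic work has already been carried out in Theorem~\ref{thm3.1}; the only subtlety is ensuring that composition with $\si$ preserves vanishing at infinity, which, as noted, follows from properness of homeomorphisms between locally compact Hausdorff spaces.
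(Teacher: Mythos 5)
Your proposal is correct. The existence half coincides with the paper's: both define $\Phit(f)=f\circ\si$ on all of $\CX$ using the homeomorphism from Theorem~\ref{thm3.1} and note that $g\mapsto g\circ\sii$ is the inverse (your extra check that $f\circ\si$ vanishes at infinity, via properness of a homeomorphism, is a detail the paper leaves implicit). The uniqueness half, however, is genuinely different. The paper assumes the competing extension $\Phit'$ is a \emph{surjective real-linear isometry}, invokes the Banach--Stone theorem to write $\Phit'(f)=\alpha\,(f\circ\varphi)$ for a sign function $\alpha$ and a homeomorphism $\varphi$, and then shows $\alpha\equiv1$ and $\varphi=\si$. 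You instead use only real-linearity: positive homogeneity forces $T(f)=\Vinf{f}\,\Phi(f/\Vinf{f})=\Phit(f)$ on the positive cone, and the decomposition $f=f^+-f^-$ with $f^\pm\in\CX$, $f^\pm\geq0$ extends this to all of $\CX$. Your argument is more elementary (no Banach--Stone) and proves a formally stronger statement, namely uniqueness among \emph{all} real-linear extensions rather than only among surjective real-linear isometric ones; the paper's approach buys nothing extra here beyond fitting the exact wording of the corollary. Both arguments are complete and valid.
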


As a first step, we state a basic criterion
for membership in $ \DK(f)$,
which will be used repeatedly in what follows.
Although the proof is straightforward,
we give it for the sake of completeness.

\begin{lem}\label{lem3.1}
Let $f,h\in\SCK$.
Then $h\in \DK(f)$ if and only if
the following equalities hold:
\begin{equation}\label{lem3.1.1}
\MK{h}\cap\ZK{f}
=\emptyset
=\ZK{h}\cap\MK{f}.
\end{equation}
\end{lem}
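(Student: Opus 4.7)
The plan is to verify both implications directly from the definitions, with the main technical point being that a function in $\CK$ attains its supremum norm whenever that norm is positive.

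For the forward direction, I would assume $h\in\DK(f)$, so $\Vinf{h-f}<1$, and suppose for contradiction that some $t\in\MK{h}\cap\ZK{f}$ exists. Then $h(t)=1$ and $f(t)=0$, giving $|h(t)-f(t)|=1$, contradicting $\Vinf{h-f}<1$. The same argument handles $\ZK{h}\cap\MK{f}$ by symmetry.

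For the reverse direction, I would assume the equalities in \eqref{lem3.1.1} and show $\Vinf{h-f}<1$. Since $0\le f,h\le 1$ pointwise, one has $|h(t)-f(t)|\le\max\{f(t),h(t)\}\le 1$ for all $t\in\K$, so in any case $\Vinf{h-f}\le 1$. Suppose toward a contradiction that $\Vinf{h-f}=1$. The key observation is that $h-f\in\CK$, and any function in $\CK$ with positive supremum norm attains its supremum: the set $\{t:|h(t)-f(t)|\ge 1/2\}$ is compact by the vanishing-at-infinity property, so $|h-f|$ attains its maximum on this compact set, hence on $\K$. Thus there exists $t_0\in\K$ with $|h(t_0)-f(t_0)|=1$. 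Combined with $0\le f(t_0),h(t_0)\le 1$, this forces $\{h(t_0),f(t_0)\}=\{0,1\}$, so $t_0$ lies in $\MK{h}\cap\ZK{f}$ or in $\ZK{h}\cap\MK{f}$, contradicting \eqref{lem3.1.1}.

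The only subtle point is ensuring attainment of the supremum norm, since $\K$ need not be compact; this is immediate from the $C_0$ condition but is worth noting explicitly because this is the first place in the paper where non-compactness of $\K$ becomes relevant. Beyond that, the proof is a pure unpacking of definitions and should be quite short.
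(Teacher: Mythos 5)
Your proof is correct and follows essentially the same route as the paper's: a direct verification of the forward implication and a contradiction argument via norm attainment for the converse. The only place you go beyond the paper is in explicitly justifying that the supremum $\|h-f\|_\infty=1$ would be attained at some point (using the $C_0$ condition and compactness of $\{t:|h(t)-f(t)|\ge 1/2\}$) --- the paper simply asserts the existence of such a point $s_0$, so your remark is a worthwhile clarification rather than a genuine divergence.
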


\begin{proof}
Suppose that $h\in \DK(f)$.
Then $\Vinf{h-f}<1$ by definition.
For each $t_0\in\MK{h}$, we have
the following inequalities:
\[
1-f(t_0)
=h(t_0)-f(t_0)
\leq|h(t_0)-f(t_0)|
\leq\Vinf{h-f}<1,
\]
which shows that $f(t_0)>0$.
This yields
$\MK{h}\cap\ZK{f}=\emptyset$.
Similarly, for each $t_1\in\MK{f}$,
we obtain the following result:
\[
1-h(t_1)
=f(t_1)-h(t_1)
\leq|f(t_1)-h(t_1)|
\leq\Vinf{f-h}<1,
\]
which proves that $h(t_1)>0$.
This shows that
$\ZK{h}\cap\MK{f}=\emptyset$.
Thus \eqref{lem3.1.1} holds.

Conversely, we assume that
equalities \eqref{lem3.1.1} hold.
We prove that $\Vinf{h-f}<1$.
Fix an arbitrary $t\in\K$.
Because $f(t), h(t)\in [0,1]$, we obtain the following inequalities:
\begin{align*}
-1\leq-f(t)
\leq h(t)-f(t)
\leq h(t)
\leq1.
\end{align*}
In particular,
$\Vinf{h-f}=\sup_{s\in\K}|h(s)-f(s)|\leq1$.
Suppose, for contradiction, $\Vinf{h-f}=1$.
Then there exists an $s_0\in\K$
with $|h(s_0)-f(s_0)|=1$.
Without loss of generality,
we may and do assume that
$h(s_0)-f(s_0)=1$.
We derive
from the preceding inequalities that
\[
1=h(s_0)-f(s_0)\leq h(s_0)\leq1.
\]
This shows that $h(s_0)=1$
and $f(s_0)=0$.
This implies that
$s_0\in\MK{h}\cap\ZK{f}$,
which is in contradiction with
$\MK{h}\cap\ZK{f}=\emptyset$.
This shows that
$\Vinf{h-f}\neq1$, thus
$\Vinf{h-f}<1$.
As a consequence, we obtain
$h\in \DK(f)$.
This establishes the lemma.
\end{proof}

For each $\tz\in\K$, we define a subset
$\PK{\tz}$ of $\SCK$ as follows:
\[
\PK{\tz} := \set{h\in\SCK:h(\tz)=1}.
\]
We refer to $\PK{\tz}$ as the family
of peak functions each of which peaks at $\tz$.
Although elementary, the following lemma will be used repeatedly in what follows.

\begin{lem}\label{lem3.2}
Let $\tz,t_1\in\K$.
If $\PK{\tz}\subseteq\PK{t_1}$,
then $\tz=t_1$ holds.
\end{lem}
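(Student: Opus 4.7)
My plan is to argue by contradiction. Assume $\tz\neq t_1$, and aim to produce an element of $\PK{\tz}$ that is not in $\PK{t_1}$, contradicting the hypothesis $\PK{\tz}\subseteq\PK{t_1}$.

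Since $\K$ is a locally compact Hausdorff space, it is in particular completely regular, and the Urysohn lemma for locally compact Hausdorff spaces applies. Using the Hausdorff property, I choose disjoint open neighborhoods $U$ of $\tz$ and $V$ of $t_1$, with $\ov{U}$ compact. Then I can construct a continuous function $h\colon\K\to[0,1]$ of compact support satisfying $h(\tz)=1$ and $h\equiv 0$ on a neighborhood of $t_1$ (in particular $h(t_1)=0$). Because $h$ has compact support, it automatically belongs to $\CK$.

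By construction $h\geq 0$ and $\Vinf{h}=h(\tz)=1$, so $h\in\SCK$ and $h\in\PK{\tz}$. The assumption $\PK{\tz}\subseteq\PK{t_1}$ then forces $h(t_1)=1$, contradicting $h(t_1)=0$. Therefore $\tz=t_1$, as required.

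There is no real obstacle in this argument; the only point worth checking is that the separating function can be arranged to lie in $\CK$ rather than merely in $C_b(\K)$, which is precisely why we invoke the compact-support version of Urysohn's lemma available in locally compact Hausdorff spaces.
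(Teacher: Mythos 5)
Your argument is correct and is essentially the same as the paper's: both proceed by contradiction (equivalently, contraposition) from $\tz\neq t_1$ and exhibit a function in $\SCK$ that peaks at $\tz$ and vanishes at $t_1$, hence lies in $\PK{\tz}\setminus\PK{t_1}$. The only difference is that you spell out the Urysohn-type construction of the separating function, which the paper leaves implicit.
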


\begin{proof}
Suppose that $\tz\neq t_1$,
then there exists an $f_0\in\SCK$ such that
$f_0(\tz)=1$ and $f_0(t_1)=0$,
and thus
$f_0\in\PK{\tz}\setminus\PK{t_1}$.
Therefore the assumption $\tz\neq t_1$
implies $\PK{\tz}\not\subseteq\PK{t_1}$, completing the proof.
\end{proof}

The following lemma establishes a connection between the zero sets and the maximum sets of
functions and the associated sets $ \DK(f)$ and $ \DK(g)$.

\begin{lem}\label{lem3.3}
Let $f,g\in\SCK$.
If $\MK{g}\subseteq\MK{f}$ and $\ZK{g}\subseteq\ZK{f}$, then $ \DK(f)\subseteq \DK(g)$.
Conversely, if $\DK(f)\subseteq\DK(g)$, then $\MK{g}\subseteq\MK{f}$.
\end{lem}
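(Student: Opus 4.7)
The forward implication is essentially immediate from Lemma~\ref{lem3.1}. Given $h\in\DK(f)$, the lemma gives $\MK{h}\cap\ZK{f}=\emptyset$ and $\ZK{h}\cap\MK{f}=\emptyset$; substituting the inclusions $\ZK{g}\subseteq\ZK{f}$ and $\MK{g}\subseteq\MK{f}$ yields the two analogous emptiness conditions for $g$, and a second appeal to Lemma~\ref{lem3.1} places $h$ in $\DK(g)$. I expect this direction to take only a few lines.

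For the converse I would argue by contrapositive: assuming $\MK{g}\not\subseteq\MK{f}$, pick $t_0\in\MK{g}\setminus\MK{f}$ and exhibit some $h\in\DK(f)\setminus\DK(g)$. The natural candidate comes from multiplying $f$ by a cutoff. Since $\MK{f}$ is compact (as $f\in\CK$ attains its supremum on a compact set) and disjoint from the closed singleton $\{t_0\}$, the locally compact Hausdorff version of Urysohn's lemma produces a continuous $u\colon K\to[0,1]$ with $u\equiv 1$ on $\MK{f}$ and $u(t_0)=0$. Setting $h:=fu$, the norm remains $1$ because $u\equiv 1$ on the nonempty set $\MK{f}$, and $h$ inherits vanishing at infinity from $f$ because $u$ is bounded. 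One then checks $\MK{h}=\MK{f}$ and $\ZK{h}=\ZK{f}\cup u^{-1}(0)$, so Lemma~\ref{lem3.1} places $h$ in $\DK(f)$, while $t_0\in\ZK{h}\cap\MK{g}$ keeps $h$ out of $\DK(g)$ by the same lemma.

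The step I expect to watch is the cutoff construction. In the unital setting one can exploit the constant function $\mathds{1}$ to manufacture strictly positive functions almost for free, which is how the corresponding step is handled in $C(K)$. In the non-unital case here, I must instead invoke the locally compact Urysohn lemma, and then confirm that multiplying by a bounded cutoff preserves membership in $\CK$; the latter is automatic from $|fu|\leq|f|$. Once these two points are secured, the remainder of the argument is routine bookkeeping through Lemma~\ref{lem3.1}.
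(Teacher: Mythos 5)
Your proposal is correct and follows essentially the same route as the paper: the forward direction is the same two-line application of Lemma~\ref{lem3.1}, and your converse is exactly the paper's construction (the paper takes $w=v_0f$ with a Urysohn-type $v_0$ vanishing at the chosen point of $\MK{g}\setminus\MK{f}$ and equal to $1$ on $\MK{f}$). The only difference is that you spell out the compactness of $\MK{f}$ and the locally compact Urysohn lemma, which the paper leaves implicit.
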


\begin{proof}
Fix an arbitrary $h\in \DK(f)$.
We deduce from Lemma~\ref{lem3.1}
that
\[
\MK{h}\cap\ZK{f}=\emptyset=\ZK{h}\cap\MK{f}.
\]
By the assumptions, we have
the following inclusions:
\begin{align*}
\MK{h}\cap \ZK{g}
&\subseteq
\MK{h}\cap\ZK{f}
=\emptyset,\\
\ZK{h}\cap\MK{g}
&\subseteq
\ZK{h}\cap\MK{f}
=\emptyset.
\end{align*}
This shows that
$\MK{h}\cap\ZK{g}
=\emptyset
=\ZK{h}\cap\MK{g}$,
which implies that $h\in \DK(g)$
by Lemma~\ref{lem3.1}.
Hence $ \DK(f)\subseteq\DK(g)$.

Conversely, suppose that $\MK{g}\not\subseteq\MK{f}$, we prove that $\DK(f)\not\subseteq\DK(g)$.
There exists a point
$\yz\in\MK{g}\setminus\MK{f}$.
Let $v_0\in\SCK$ be such that
$v_0(\yz)=0$ and $v_0=1$
on $\MK{f}$.
We set $w =v_0f$.
Then $w\in\SCK$
with $\yz\in\ZK{w}$ and $\MK{w}=\MK{f}$.
By the choice of $\yz$, we conclude that
$\yz\in\ZK{w}\cap\MK{g}$.
This yields
$\ZK{w}\cap\MK{g}\neq\emptyset$.
It follows from Lemma~\ref{lem3.1}
that $w\not\in\DK(g)$.
On the other hand,
we have the following identities
by definition:
\[
\MK{f}\cap\ZK{f}=\emptyset
\qqbox{and}
\ZK{w}\cap\MK{w}=\emptyset.
\]
By applying $\MK{w}=\MK{f}$,
we conclude that
\[
\MK{w}\cap\ZK{f}=\emptyset
\qqbox{and}
\ZK{w}\cap\MK{f}=\emptyset.
\]
Now, we apply
Lemma~\ref{lem3.1} to conclude that
$w\in\DK(f)$.
From this we deduce $w\in\DK(f)\setminus\DK(g)$,
which shows $\DK(f)\not\subseteq\DK(g)$.
We conclude that $\DK(f)\subseteq\DK(g)$
implies $\MK{g}\subseteq\MK{f}$.
Therefore, the claim follows.
This completes the proof.
\end{proof}

In the remainder of this paper,
we assume that $\Phi$ is a surjective
isometry from $\SCX$ onto $\SCY$,
where $X$ and $Y$ are locally compact
Hausdorff spaces.
Then the inverse map $\Phii\colon\SCY\to\SCX$ is also a well-defined surjective isometry.

At this stage, we clarify the structure
of the image under $\Phi$ by relating
$\Phi(\DX(f))$ and $ \DY(\Phi(f))$.

\begin{lem}\label{lem3.4}
For each $f\in\SCX$, we have
$\Phi(\DX(f))= \DY(\Phi(f))$.
\end{lem}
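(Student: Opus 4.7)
The statement is really a distance-preservation fact, so the plan is to unwind the definitions and use nothing more than the bijective isometry property of $\Phi$. Recall that $\DX(f) = \{h \in \SCX : \Vinf{h-f} < 1\}$ and $\DY(\Phi(f)) = \{v \in \SCY : \Vinf{v-\Phi(f)} < 1\}$, so both sets are defined purely in terms of the metric.

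For the inclusion $\Phi(\DX(f)) \subseteq \DY(\Phi(f))$, I would fix $h \in \DX(f)$ and apply the isometry property directly:
\[
\Vinf{\Phi(h) - \Phi(f)} = \Vinf{h - f} < 1,
\]
which places $\Phi(h) \in \DY(\Phi(f))$.

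For the reverse inclusion, I would pick an arbitrary $v \in \DY(\Phi(f))$ and use the surjectivity of $\Phi$ (equivalently, the well-definedness of $\Phii$) to write $v = \Phi(h)$ for some $h \in \SCX$. Applying the isometry property once more,
\[
\Vinf{h - f} = \Vinf{\Phi(h) - \Phi(f)} = \Vinf{v - \Phi(f)} < 1,
\]
so $h \in \DX(f)$ and therefore $v = \Phi(h) \in \Phi(\DX(f))$.

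There is no real obstacle here; the lemma is essentially a bookkeeping statement formalizing that $\DK(\cdot)$ is an invariant of the metric structure on $\SCK$, and the proof amounts to combining the isometry identity with the surjectivity of $\Phi$. Its role is presumably to let us transport inclusions among $\DX$-sets over to $\DY$-sets via $\Phi$, which, together with Lemma~\ref{lem3.3}, will translate statements about maximum sets on $X$ into statements about maximum sets on $Y$ in later arguments.
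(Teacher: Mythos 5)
Your proof is correct and follows essentially the same route as the paper: the forward inclusion is identical, and your reverse inclusion (writing $v=\Phi(h)$ by surjectivity and applying the isometry identity) is just an unwound version of the paper's step of applying the same argument to the pair $(\Phii,\Phi(f))$. No issues.
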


\begin{proof}
We first show that
$\Phi(\DX(f))\subseteq\DY(\Phi(f))$.
Fix an arbitrary $u\in\Phi(\DX(f))$.
By the surjectivity of $\Phi$, 
there exists $h\in\DX(f)$
such that $u=\Phi(h)$.
Because $\Phi$ is an isometry and $h\in\DX(f)$, we obtain the following inequalities:
\[
\Vinf{u-\Phi(f)}
=\Vinf{\Phi(h)-\Phi(f)}
=\Vinf{h-f}<1.
\]
This shows that
$u\in\DY(\Phi(f))$.
Therefore we conclude that
$\Phi(\DX(f))\subseteq\DY(\Phi(f))$.
To prove the reverse inclusion,
we apply the same argument
to the pair of $(\Phii,\Phi(f))$,
instead of $(\Phi,f)$.
It follows that
$\Phii(\DY(\Phi(f)))
\subseteq\DX(\Phii(\Phi(f)))=\DX(f)$.
This proves that $\DY(\Phi(f))\subseteq\Phi(\DX(f))$.
This completes the proof
of the identity.
\end{proof}

We next prove that finite families of
peak functions yield non-empty
intersections of their corresponding maximum sets.
This is the first step
toward the general case.

\begin{lem}\label{lem3.6}
Let $\xz\in\X$, $n\in\N$ and
$f_j\in\PX{\xz}$ for $j\in\N$
with $1\leq j\leq n$.
Then the intersection
$\bigcap_{j=1}^n\MY{\Phi(f_j)}$ is a
non-empty set.
\end{lem}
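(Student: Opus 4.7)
The plan is to reduce the problem to a single function by finding an $h \in \SCX$ whose peak and zero sets are \emph{both} the intersections $\bigcap_j \MX{f_j}$ and $\bigcap_j \ZX{f_j}$, respectively, and then transport $\MY{\Phi(h)}$ through the chain Lemma~\ref{lem3.3}$\to$Lemma~\ref{lem3.4}$\to$Lemma~\ref{lem3.3} into every $\MY{\Phi(f_j)}$.

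Concretely, I would set
\[
h:=\frac{1}{n}\sum_{j=1}^{n}f_j.
\]
Since each $f_j\in\SCX$ satisfies $0\le f_j\le 1$ and $f_j(\xz)=1$, the function $h$ lies in $\CX$ with $0\le h\le 1$ and $h(\xz)=1$, so $h\in\PX{\xz}\subseteq\SCX$. The crucial point is that, because each summand is nonnegative and bounded by $1$, the arithmetic mean equals $1$ exactly when every summand does, and equals $0$ exactly when every summand does; hence
\[
\MX{h}=\bigcap_{j=1}^n\MX{f_j}
\qbox{and}
\ZX{h}=\bigcap_{j=1}^n\ZX{f_j}.
\]
In particular, for every $j$ one has $\MX{h}\subseteq\MX{f_j}$ and $\ZX{h}\subseteq\ZX{f_j}$.

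Applying the forward direction of Lemma~\ref{lem3.3} to the pair $(f_j,h)$ then gives $\DX(f_j)\subseteq\DX(h)$ for each $j$. Since $\Phi$ is a bijection, Lemma~\ref{lem3.4} transports this inclusion to the $Y$-side:
\[
\DY(\Phi(f_j))=\Phi(\DX(f_j))\subseteq\Phi(\DX(h))=\DY(\Phi(h)).
\]
Now the converse half of Lemma~\ref{lem3.3}, applied to the pair $(\Phi(h),\Phi(f_j))$, yields $\MY{\Phi(h)}\subseteq\MY{\Phi(f_j)}$ for every $j$. Finally, $\Phi(h)\in\SCY$ has supremum norm $1$ and vanishes at infinity, so the sublevel-complement $\{y\in\Y:\Phi(h)(y)\ge 1/2\}$ is compact and nonempty, and continuity on this compact set shows that the supremum is actually attained; thus $\MY{\Phi(h)}\neq\emptyset$. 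Combining these observations gives
\[
\emptyset\neq\MY{\Phi(h)}\subseteq\bigcap_{j=1}^{n}\MY{\Phi(f_j)},
\]
as required.

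I do not foresee a serious obstacle in this argument: the only nonobvious move is the choice of $h$, and once one recognizes that the mean of positive functions bounded by $1$ simultaneously characterizes the common peak set and the common zero set, the rest is a clean application of the previous lemmas. Verifying that a function in $\SCY$ attains its supremum is a routine consequence of vanishing at infinity.
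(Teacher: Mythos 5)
Your proposal is correct and follows essentially the same route as the paper: the same averaging function $h=\frac{1}{n}\sum_{j=1}^n f_j$, the same identification of its maximum and zero sets, and the same chain Lemma~\ref{lem3.3} $\to$ Lemma~\ref{lem3.4} $\to$ Lemma~\ref{lem3.3} to get $\MY{\Phi(h)}\subseteq\MY{\Phi(f_j)}$ for each $j$. Your explicit justification that $\MY{\Phi(h)}\neq\emptyset$ via vanishing at infinity is a small added detail the paper leaves implicit.
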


\begin{proof}
Let $g=\sum_{j=1}^nf_j/n$
and fix an arbitrary $k\in\N$
with $1\leq k\leq n$.
Since $0\leq f_k\leq1$
and $f_k(\xz)=1$,
we have $0\leq g\leq1$ and
$g(\xz)=1$.
This yields $g\in\PX{\xz}$.
It is straightforward to verify that
\[
\MX{g}=\bigcap_{j=1}^n\MX{f_j}
\qqbox{and}
\ZX{g}=\bigcap_{j=1}^n\ZX{f_j}.
\]
In particular, we have $\MX{g}\subseteq\MX{f_k}$ and $\ZX{g}\subseteq\ZX{f_k}$.
We conclude that $\DX(f_k)\subseteq\DX(g)$ by Lemma~\ref{lem3.3}, and hence $\Phi(\DX(f_k))\subseteq\Phi(\DX(g))$.
Combining with Lemma~\ref{lem3.4}, this gives $\DY(\Phi(f_k))\subseteq\DY(\Phi(g))$.
It follows from Lemma~\ref{lem3.3} that
$\MY{\Phi(g)}\subseteq\MY{\Phi(f_k)}$.
Therefore $\MY{\Phi(g)}\subseteq\bigcap_{j=1}^n\MY{\Phi(f_j)}$
as $k\in\N$ with $1\leq k\leq n$
was arbitrary.
Moreover, since $\Phi(g)\in\SCY$, we have $\MY{\Phi(g)}\neq\emptyset$.
As a consequence, we obtain $\bigcap_{j=1}^n\MY{\Phi(f_j)}\neq\emptyset$.
This completes the proof.
\end{proof}

By extending the finite case established in
Lemma~\ref{lem3.6},
we obtain the following result
for arbitrary families of peak functions.

\begin{lem}\label{lem3.7}
The intersection $\bigcap_{f\in\PX{\xz}}\MY{\Phi(f)}$ is a non-empty set for each $\xz\in\X$. \end{lem}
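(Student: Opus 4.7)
The plan is to deduce the infinite intersection from the finite case (Lemma~\ref{lem3.6}) via a compactness argument based on the finite intersection property. The crucial observation is that, although $Y$ need not be compact, each maximum set $\MY{\Phi(f)}$ is in fact \emph{compact}: since $\Phi(f)\in\CY$ vanishes at infinity, the preimage $\{y\in\Y:\Phi(f)(y)\geq 1/2\}$ is a compact subset of $Y$, and $\MY{\Phi(f)}$ is a closed subset of it.

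First I would note that $\PX{\xz}$ is non-empty: by Urysohn's lemma for locally compact Hausdorff spaces there is a continuous $[0,1]$-valued function with compact support that takes value $1$ at $\xz$, so such a function lies in $\PX{\xz}$. I would then fix one such function $f_0\in\PX{\xz}$ and work inside the compact set $\MY{\Phi(f_0)}$. For each $f\in\PX{\xz}$, the set $\MY{\Phi(f)}\cap\MY{\Phi(f_0)}$ is closed in the compact space $\MY{\Phi(f_0)}$.

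Next I would invoke Lemma~\ref{lem3.6}: for any finite collection $f_1,\dots,f_n\in\PX{\xz}$, the intersection
\[
\bigcap_{j=1}^{n}\MY{\Phi(f_j)}\cap\MY{\Phi(f_0)}
=\bigcap_{j=0}^{n}\MY{\Phi(f_j)}
\]
is non-empty, since $f_0,f_1,\dots,f_n$ all lie in $\PX{\xz}$. Hence the family $\{\MY{\Phi(f)}\cap\MY{\Phi(f_0)}:f\in\PX{\xz}\}$ has the finite intersection property, and the standard compactness characterization yields
\[
\bigcap_{f\in\PX{\xz}}\MY{\Phi(f)}
=\Bigl(\bigcap_{f\in\PX{\xz}}\MY{\Phi(f)}\cap\MY{\Phi(f_0)}\Bigr)\neq\emptyset.
\]

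The main (and essentially only) obstacle is the verification that the maximum sets are genuinely compact in $Y$, which is where the vanishing-at-infinity hypothesis is used in an essential way. Without it, one would only have closedness, and the finite intersection property would not suffice. Once compactness is in hand, the argument reduces to Lemma~\ref{lem3.6} together with elementary point-set topology.
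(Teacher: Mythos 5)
Your proposal is correct and follows essentially the same route as the paper: both deduce the result from Lemma~\ref{lem3.6} by observing that each $\MY{\Phi(f)}$ is compact (because $\Phi(f)$ vanishes at infinity) and then applying the finite intersection property. Your extra step of fixing $f_0\in\PX{\xz}$ and working inside the compact set $\MY{\Phi(f_0)}$ is just a more explicit rendering of the same compactness argument the paper invokes directly for the family of compact sets.
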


\begin{proof}
Fix $\xz\in\X$ arbitrarily.
Because $\Phi(f)\in\SCY$ vanishes at infinity, the set $\MY{\Phi(f)}$ is a compact subset of $\Y$ for all $f\in\PX{\xz}$.
By Lemma~\ref{lem3.6},
the collection $\{\MY{\Phi(f)} : f\in\PX{x_0}\}$ of compact subsets of $Y$
has the finite intersection property.
We thus conclude that
\( \bigcap_{f \in \PX{\xz}} \MY{\Phi(f)}\)
is a non-empty set for each $\xz\in\X$.
\end{proof}

The following lemma identifies a unique point
$y\in Y$ associated with each $x\in X$,
which allows us to define a map
between $X$ and $Y$.

\begin{lem}\label{lem3.8}
For each $\xz\in\X$ there exists a unique
$\yz\in\Y$ such that
$\Phi(\PX{\xz})=\PY{\yz}$.
\end{lem}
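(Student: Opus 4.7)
The plan is to use Lemma~\ref{lem3.7} applied to both $\Phi$ and its inverse, and then invoke the separation property of peak functions (Lemma~\ref{lem3.2}) to pin down the associated points.

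Fix $\xz\in\X$. By Lemma~\ref{lem3.7} the intersection $\bigcap_{f\in\PX{\xz}}\MY{\Phi(f)}$ is non-empty, so choose $\yz$ in this intersection. Then $\Phi(f)(\yz)=1$ for every $f\in\PX{\xz}$, which directly gives the inclusion $\Phi(\PX{\xz})\subseteq\PY{\yz}$, equivalently $\PX{\xz}\subseteq\Phii(\PY{\yz})$.

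For the reverse inclusion, I would apply Lemma~\ref{lem3.7} to the surjective isometry $\Phii\colon\SCY\to\SCX$ at the point $\yz$: there exists $x_1\in\X$ lying in $\bigcap_{v\in\PY{\yz}}\MX{\Phii(v)}$, and by the same argument as above this yields $\Phii(\PY{\yz})\subseteq\PX{x_1}$. Chaining the two inclusions, $\PX{\xz}\subseteq\Phii(\PY{\yz})\subseteq\PX{x_1}$, so Lemma~\ref{lem3.2} forces $\xz=x_1$. Hence $\Phii(\PY{\yz})\subseteq\PX{\xz}$, which gives $\PY{\yz}\subseteq\Phi(\PX{\xz})$, completing the identity $\Phi(\PX{\xz})=\PY{\yz}$.

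Uniqueness of $\yz$ is then immediate: if some $y_1\in\Y$ also satisfies $\Phi(\PX{\xz})=\PY{y_1}$, then $\PY{\yz}=\PY{y_1}$, so Lemma~\ref{lem3.2} (applied to $\K=\Y$, in both directions) gives $\yz=y_1$.

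There is no serious obstacle here: the non-empty intersection from Lemma~\ref{lem3.7} supplies one inclusion essentially for free, and the only conceptual move is the symmetric application to $\Phii$, which is legitimate because $\Phii$ is again a surjective isometry between positive unit spheres.
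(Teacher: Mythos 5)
Your proposal is correct and follows essentially the same route as the paper: choose $\yz$ in the non-empty intersection from Lemma~\ref{lem3.7} to get $\Phi(\PX{\xz})\subseteq\PY{\yz}$, apply the symmetric argument to $\Phii$ to obtain a point $x_1$ with $\Phii(\PY{\yz})\subseteq\PX{x_1}$, and use Lemma~\ref{lem3.2} on the chained inclusions to force $\xz=x_1$, with uniqueness again from Lemma~\ref{lem3.2}. The only cosmetic difference is that you chain the inclusions on the $X$ side while the paper chains them on the $Y$ side via injectivity of $\Phi$; the content is identical.
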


\begin{proof}
Fix an arbitrary $\xz\in\X$.
By Lemma~\ref{lem3.7}, we have
$\bigcap_{f\in\PX{\xz}}\MY{\Phi(f)}\neq\emptyset$.
Choose
$\yz\in\bigcap_{f\in\PX{\xz}}\MY{\Phi(f)}$.
We claim that
$\Phi(\PX{\xz})\subseteq\PY{\yz}$.
Let $u\in\Phi(\PX{\xz})$ be fixed.
Then there exists an $h\in\PX{\xz}$
such that $u=\Phi(h)$.
By virtue of the property that
$\yz\in\bigcap_{f\in\PX{\xz}}\MY{\Phi(f)}$,
we have $\yz\in\MY{\Phi(h)}=\MY{u}$.
This implies that $u(\yz)=1$,
hence $u\in\PY{\yz}$.
This shows that
\[
\Phi(\PX{\xz})\subseteq\PY{\yz}.
\]
Applying the same argument to 
$(\Phii,\yz)$ yields
$\Phii(\PY{\yz})\subseteq\PX{\xo}$
for some $\xo\in\X$, that is,
\[
\PY{\yz}\subseteq\Phi(\PX{\xo}).
\]
Combining the previous inclusions gives
$\Phi(\PX{\xz})\subseteq
\PY{\yz}\subseteq\Phi(\PX{\xo})$.
Since $\Phi$ is injective,
it follows that
$\PX{\xz}\subseteq\PX{\xo}$.
By Lemma~\ref{lem3.2},
we have $\xz=\xo$.
It follows from the previous inclusions that
$\Phi(\PX{\xz})=\PY{\yz}$.

Finally, we prove the uniqueness of such
point $\yz\in\Y$.
Suppose that another point $\yo\in\Y$ satisfies
$\Phi(\PX{\xz})=\PY{\yo}$ as well.
Then we have
$\PY{\yz}=\Phi(\PX{\xz})=\PY{\yo}$.
By applying Lemma~\ref{lem3.2},
we obtain $\yz=\yo$,
which proves the uniqueness of the point $\yz\in\Y$
that satisfies $\Phi(\PX{\xz})=\PY{\yz}$.
\end{proof}

Finally, we record the following auxiliary
lemma, which provides a technical
construction used in the proof of
Theorem~\ref{thm3.1}.
The idea of proof originated in
\cite[Lemma~3.14]{miu}.

\begin{lem}\label{lem3.9}
Let $v\in\SCY$ and $\yz\in\Y$ be such that $v(\yz)<1$.
Then there exists a $u_0\in\PY{\yz}$
such that $v+(1-v(\yz))u_0\in\PY{\yz}$.
\end{lem}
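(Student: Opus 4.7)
The objective is to produce $u_0 \in \PY{\yz}$, meaning $u_0 \in \SCY$ with $u_0(\yz)=1$, such that $h := v + (1-v(\yz))u_0$ again belongs to $\PY{\yz}$. Note that $h(\yz) = v(\yz) + (1-v(\yz)) = 1$ automatically, so the task reduces to ensuring $h$ is non-negative, lies in $\CY$, and has supremum norm at most $1$. Non-negativity is automatic since $v,u_0\geq 0$ and $1-v(\yz)>0$. The condition $h(y)\leq 1$ for all $y$ is equivalent to the pointwise inequality
\[
u_0(y) \leq \frac{1-v(y)}{1-v(\yz)} \qq (y\in\Y),
\]
with equality forced at $\yz$.

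The plan is to first construct a natural ``envelope'' and then truncate it to recover membership in $\CY$. Define
\[
\phi(y) := \min\set{1,\ \frac{1-v(y)}{1-v(\yz)}},
\]
which is well-defined and continuous because $v$ is continuous and $1-v(\yz)>0$. By construction, $0\leq\phi\leq 1$ and $\phi(\yz)=1$. However, since $v\in\CY$ vanishes at infinity, $(1-v(y))/(1-v(\yz))$ tends to $1/(1-v(\yz))\geq 1$ outside compact sets, so $\phi$ itself need not vanish at infinity and hence need not belong to $\CY$. This is the main technical obstacle.

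To overcome it, I would invoke Urysohn's lemma in the locally compact Hausdorff space $\Y$ to select a function $\psi$ with compact support satisfying $0\leq\psi\leq 1$ and $\psi(\yz)=1$. Setting
\[
u_0 := \phi\,\psi,
\]
the function $u_0$ has compact support (hence lies in $\CY$), satisfies $u_0(\yz) = \phi(\yz)\psi(\yz) = 1$, and $0\leq u_0\leq \phi\leq 1$, so $\Vinf{u_0}=1$ and therefore $u_0\in\PY{\yz}$.

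Finally, I would verify that $h=v+(1-v(\yz))u_0$ lies in $\PY{\yz}$. Continuity, non-negativity, and vanishing at infinity are clear since $v\in\CY$ and $u_0\in\CY$. At $\yz$ we already have $h(\yz)=1$. For an arbitrary $y\in\Y$, using $u_0(y)\leq \phi(y)\leq (1-v(y))/(1-v(\yz))$ gives
\[
v(y) + (1-v(\yz))u_0(y) \leq v(y) + (1-v(y)) = 1,
\]
so $\Vinf{h}=1$ and $h\in\SCY$. Combined with $h(\yz)=1$, this yields $h\in\PY{\yz}$, completing the argument.
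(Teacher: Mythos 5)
Your proof is correct, and it takes a genuinely different and more elementary route than the paper's. The paper (following the function-algebra technique of \cite[Lemma~3.14]{miu}) stratifies $\Y$ into the level sets $Y_0,Y_1,Y_2,\dots$ according to the size of $|v(y)-v(\yz)|$, chooses a Urysohn function $u_n$ peaking at $\yz$ and vanishing on $Y_0\cup Y_n$ for each $n$, and takes $u_0=\sum_{n=1}^\infty u_n/2^n$; the norm bound $\Vinf{v+ru_0}\leq 1$ is then obtained by balancing the estimate $v(y)\leq v(\yz)+r/2^{m+1}$ on $Y_m$ against the deficit $u_0(y)\leq 1-1/2^m$ there. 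You instead observe that the required condition is exactly the pointwise envelope bound $u_0\leq(1-v)/(1-v(\yz))$, realize the largest admissible candidate $\phi=\min\set{1,(1-v)/(1-v(\yz))}$ by truncation, correctly identify that $\phi$ fails to vanish at infinity, and repair this by multiplying with a compactly supported Urysohn bump equal to $1$ at $\yz$; since $0\leq u_0=\phi\psi\leq\phi$, the envelope bound survives and every required property of $u_0$ and of $w=v+(1-v(\yz))u_0$ follows in one line. All the properties of $u_0$ that the main theorem actually uses ($u_0\in\PY{\yz}$, $\Vinf{u_0}=1$, and $v+(1-v(\yz))u_0\in\PY{\yz}$) are delivered by your construction, so it is a drop-in replacement. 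The trade-off is that the paper's series argument avoids pointwise minima and truncations, which is essential in the uniform-algebra setting it was imported from but is unnecessary in $\CY$ with real-valued functions, where your shorter argument is the more natural one.
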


\begin{proof}
We set $r=1-v(\yz)$.
Then $r>0$ by assumption.
We define
\begin{align*}
Y_0
&=
\set{y\in\Y:|v(y)-v(\yz)|\geq\fr{r}{4}},\\
Y_n
&=
\set{y\in\Y:
\fr{r}{2^{n+2}}\leq|v(y)-v(\yz)|
\leq\fr{r}{2^{n+1}}},\quad( n\in\mathbb{N}).
\end{align*}
We observe that $Y_k$ is a closed subset of $\Y$ with $\yz\not\in Y_k$ for all $k\in\N\cup\set{0}$.
For each $n\in\N$ there exists a $u_n\in\SCY$ such that
\begin{equation}\label{lem3.9.1}
u_n(\yz)=1
\qqbox{and}
u_n=0
\qbox{on $Y_0\cup Y_n$.}
\end{equation}
We define a function $u_0\colon\Y\to\R$
as follows:
\[
u_0=\sum_{n=1}^\infty\fr{u_n}{2^n}.
\]
Then $u_0$ converges
in $C_0(Y)$ with $u_0\geq0$, as $u_n\in\SCY$
for $n\in\N$.
Since $u_n(\yz)=1=\Vinf{u_n}$ for $n\in\mathbb{N}$, we obtain the following inequalities:
\[
1=u_0(\yz)
\leq\Vinf{u_0}
\leq\sum_{n=1}^\infty\fr{\Vinf{u_n}}{2^n}=1.
\]
Hence, $u_0(\yz)=1=\Vinf{u_0}$.
This yields $u_0\in\PY{\yz}$.

We define a function
$w\colon\Y\to\R$ as follows:
\[
w=v+ru_0.
\]
We obtain $w(\yz)=1$
as $r=1-v(\yz)$ and $u_0(\yz)=1$.
We prove that $w\in\SCY$, that is, $0\leq w(y)\leq1$ for all $y\in\Y$.
Fix an arbitrary $y\in\Y$.
We have $w(y)\geq0$ as $v,u_0\geq0$
with $r>0$.
We will consider three cases
to show that $w(y)\leq1$.
\begin{itemize}
\item
If $y\in Y_0$, then equality \eqref{lem3.9.1}
shows that $u_n(y)=0$ for all $n\in\N$,
hence $u_0(y)=0$.
Then we obtain
$w(y)=v(y)\leq1$.

\item
If $y\in Y_m$ for some $m\in\N$,
then we have
$v(y)-v(\yz)\leq r/2^{m+1}$
by definition.
By combining the last inequality
with $r=1-v(\yz)$,
we obtain the following result:
\[
v(y)
\leq
v(\yz)+\fr{r}{2^{m+1}}
=1+r\left(\fr{1}{2^{m+1}}-1\right).
\]
We derive from equality \eqref{lem3.9.1}
with $y\in Y_m$
that $u_m(y)=0$.
We obtain
\[
u_0(y)
=\sum_{n\neq m}^\infty\fr{u_n(y)}{2^n}
\leq1-\fr{1}{2^m}.
\]
By combining the preceding inequalities,
we obtain the following results:
\begin{align*}
w(y)
&=
v(y)+ru_0(y)\\
&\leq
1+r\left(\fr{1}{2^{m+1}}-1\right)
+r\left(1-\fr{1}{2^m}\right)\\
&=
1-\fr{r}{2^{m+1}}<1.
\end{align*}
Therefore, we have $w(y)\leq1$.

\item
If $y\not\in\cup_{k=0}^\infty Y_k$,
then we obtain $v(y)=v(\yz)=1-r$.
We have the following inequality:
\[
w(y)=1-r+ru_0(y)
\leq1.
\]
We conclude that $w(y)\leq1$.
\end{itemize}
As a consequence, we conclude that
$w(y)\leq1$ for all $y\in\Y$.
This shows that $w\in\PY{\yz}$.
This completes the proof.
\end{proof}

The preceding lemmas provide
all the necessary ingredients,
and we can now assemble them
to prove the main result.

\begin{proof}[\textbf{Proof of Theorem~\ref{thm3.1}}]
We first introduce two maps,
$\tau,\si$, between $\X$ and $\Y$.
By Lemma~\ref{lem3.8}, there exists a map
$\tau\colon\X\to\Y$ that satisfies
$\Phi(\PX{x})=\PY{\tau(x)}$
for all $x\in\X$.
Similarly, applying Lemma~\ref{lem3.8} to $\Phii$
instead of $\Phi$, we obtain
a map $\si\colon\Y\to\X$ satisfying that
$\Phii(\PY{y})=\PX{\si(y)}$
for all $y\in\Y$.
Consequently, this yields the following identities:
\begin{align*}
&\Phi(\PX{x})=\PY{\tau(x)} \hspace{0.95cm}(x\in X),\\[2pt]
&\Phii(\PY{y})=\PX{\si(y)} \hspace{0.6cm}(y\in Y).
\end{align*}
By combining the
last two identities,
we obtain
\begin{equation}\label{thm3.1.1}
\begin{aligned}
\PX{x}&=\Phii(\PY{\tau(x)})=\PX{\si(\tau(x))} \hspace{0.6cm}(x\in X),\\[4pt]
\PY{y}&=\Phi(\PX{\si(y))}=\PY{\tau(\si(y))}\hspace{0.95cm}(y\in Y).
\end{aligned}
\end{equation}
It follows from Lemma~\ref{lem3.2} that
$x=\si(\tau(x))$ and $y=\tau(\si(y))$
for all $x\in\X$ and $y\in\Y$.
Hence, $\tau$ and $\si$ are bijections,
and moreover $\tau=\sii$.

Fix $f\in\SCX$ and $\yz\in\Y$
arbitrarily. We prove that 
$\Phi(f)(\yz)\leq f(\si(\yz))$.
To see this, we consider two cases.
First, suppose that $\Phi(f)(\yz)=1$.
By equality~\eqref{thm3.1.1}, we have
$\Phi(f)\in\PY{\yz}=\Phi(\PX{\si(\yz)})$,
which implies $f\in\PX{\si(\yz)}$.
This shows that
$f(\si(\yz))=1=\Phi(f)(\yz)$, and therefore the desired inequality holds in this case.
We next consider the case where
$\Phi(f)(\yz)<1$.
Let $u_0\in\PY{\yz}$ be as in Lemma~\ref{lem3.9}, corresponding to $v=\Phi(f)$.
By setting $r=1-\Phi(f)(\yz)$,
Lemma~\ref{lem3.9} shows
that $u_0$ satisfies
$\Phi(f)+ru_0\in\PY{\yz}$.
For brevity in notation,
we will write $w=\Phi(f)+ru_0$.
As $w\in\PY{\yz}$,
it follows from equality \eqref{thm3.1.1} that
$w\in\Phi(\PX{\si(\yz)})$,
hence $\Phii(w)\in\PX{\si(\yz)}$.
In particular, $\Phii(w)(\si(\yz))=1$.
Because $\Phi$ is an isometry,
we have:
\begin{align*}
1-f(\si(\yz))
&\leq
|\Phii(w)(\si(\yz))-f(\si(\yz))|\\
&\leq
\Vinf{\Phii(w)-f}\\
&=
\Vinf{w-\Phi(f)}\\
&=\Vinf{ru_0}
=
1-\Phi(f)(\yz),
\end{align*}
where we used $\Vinf{u_0}=1$.
From this we conclude that $\Phi(f)(\yz)\leq f(\si(\yz))$ also holds if $\Phi(f)(\yz)<1$.
Therefore, $\Phi(f)(y)\leq f(\si(y))$ for all $f\in\SCX$ and $y\in\Y$.

Applying the above arguments
to the pair of $(\Phii,\sii)$,
instead of $(\Phi,\si)$, we obtain
$\Phii(u)(x)\leq u(\sii(x))$
for all $u\in\SCY$ and $x\in\X$.
By letting $u=\Phi(f)$ and $x=\si(y)$
for $f\in\SCX$ and $y\in\Y$
in the last inequality,
we conclude that $f(\si(y))\leq\Phi(f)(y)$.
As a consequence, the following equality holds
for all $f\in\SCX$ and $y\in\Y$:
\begin{equation}\label{thm3.1.2}
\Phi(f)(y)=f(\si(y)).
\end{equation}

Now, we prove that $\si\colon\Y\to\X$
is continuous.
Fix an arbitrary open subset
$U\subset\X$.
We need to show that $\sii(U)$ is an
open subset of $\Y$.
Let $\yo\in\sii(U)$ be an arbitrary point.
There exists an $f_1\in\SCX$ such that
$f_1(\si(\yo))=1$ and $f_1=0$
on $\X\setminus U$.
Define a subset $V$
of $\Y$ as follows:
\[
V=\set{y\in\Y:\Phi(f_1)(y)>1/2}.
\]
It is immediate that $V$ is an open subset
of $\Y$.
It follows from
equality
\eqref{thm3.1.2}
that 
$\Phi(f_1)(\yo)=f_1(\si(\yo))=1$,
which yields that $\yo\in V$.
For each $y\in V$, we obtain
$f_1(\si(y))=\Phi(f_1)(y)>1/2$
by equality
\eqref{thm3.1.2}.
This implies that $\si(V)\subseteq U$
because $f_1=0$ on $\X\setminus U$.
We conclude that $\yo\in V\subseteq\sii(U)$ with $V$ open,
which proves that $\sii(U)$ is an open subset
of $\Y$.
As a consequence, $\si$ is continuous on $\Y$.

The above argument, applied to
the pair of $(\Phii,\sii)$, instead of
$(\Phi,\si)$, shows that
$\sii\colon\X\to\Y$ is continuous.
We conclude that
$\si\colon\Y\to\X$ is a homeomorphism.
This completes the proof of Theorem~\ref{thm3.1}.
\end{proof}

\begin{proof}[\textbf{Proof of Corollary~\ref{cor1}}]
Let $\Phi\colon\SCX\to\SCY$ be
a surjective isometry.
Then there exists a homeomorphism
$\si\colon\Y\to\X$ such that
$\Phi(f)=f\circ\si$ for all $f\in\SCX$ by
Theorem~\ref{thm3.1}.
Now we define a map
$\Phit\colon\CX\to\CY$ as follows:
\[
\Phit(f)=f\circ\si
\qq(f\in\CX).
\]
It is immediate that $\Phit$ is a real-linear isometry
that satisfies $\Phit=\Phi$ on $\SCX$.
It is plain that the map $g\mapsto g\circ\sigma^{-1}$ from $\CY$ to $\CX$ is the inverse of $\Phit$.
In particular, $\Phit$ is a surjective isometry from $\CX$ onto $\CY$.

Finally we show that the extension
of $\Phi$ is unique.
Let $\Phit'\colon\CX\to\CY$ be
another surjective real-linear isometry
with $\Phit'=\Phi$ on $\SCX$.
By the Banach--Stone theorem,
there exists a continuous function
$\alpha\colon\Y\to\set{\pm1}$
and a homeomorphism
$\varphi\colon\Y\to\X$ such that
$\Phit'(f)=\alpha(f\circ\varphi)$
for all $f\in\CX$.
For each $y\in\Y$, there exists
an $f_y\in\SCX$ that satisfies
$f_y(\varphi(y))=1$.
We obtain
\[
\alpha(y)
=\alpha(y)f_y(\varphi(y))
=\Phit'(f_y)(y)
=\Phi(f_y)(y)
\geq0.
\]
Because $\alpha(y)\in\set{\pm1}$,
we obtain $\alpha(y)=1$
for all $y\in\Y$.
As a consequence, we obtain
$f\circ\varphi=\Phit'(f)
=\Phit(f)=f\circ\si$
for all $f\in\SCX$.
Since $\SCX$ separates the points
of $\X$,
we observe that $\varphi=\si$,
which shows that $\Phit'=\Phit$.
This completes the proof.
\end{proof}

\section{Applications to phase-isometries}
\label{sect4}

In this section we apply our main theorem
to the study of phase-isometries
on the positive unit sphere.
Let $A,B$ be subsets of Banach spaces.
A map $T\colon A\to B$ is called
a \textit{phase-isometry} if it satisfies
the following equality for all $a,b\in A$:
\[
\set{\|T(a)+T(b)\|,\|T(a)-T(b)\|}
=\set{\|a+b\|,\|a-b\|}.
\]
Phase-isometries have been extensively
investigated in the literature
(see, for example, \cite{ilis,sun}).
Our main result immediately yields
the following theorem,
which provides a concise characterization
of phase-isometries,
and generalizes \cite[Theorem~1.1]{hiro1}.

\begin{thm}\label{thm2}
Let $T\colon\SCX\to\SCY$ be
a surjective phase-isometry.
Then there exists a homeomorphism
$\si\colon\Y\to\X$ such that
$T(f)=f\circ\si$ holds
for all $f\in\SCX$.
Consequently, $T$ extends uniquely
to a surjective real-linear isometry
from $\CX$ onto $\CY$.
\end{thm}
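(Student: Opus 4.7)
The plan is to reduce the phase-isometry hypothesis on $T$ to an ordinary isometry hypothesis by exploiting the positivity of the elements in $\SCX$ and $\SCY$. Once $T$ is shown to be a surjective isometry between the positive unit spheres, the representation $T(f)=f\circ\si$ follows at once from Theorem~\ref{thm3.1}, and the extension to a surjective real-linear isometry from $\CX$ onto $\CY$ is then provided by Corollary~\ref{cor1}.

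The key elementary observation is the following: for any $a,b\in\SCK$, positivity gives $|a(t)-b(t)|\leq a(t)+b(t)$ at every $t\in\K$, and hence $\Vinf{a-b}\leq\Vinf{a+b}$. I would apply this inequality both to $(a,b)\in\SCX\times\SCX$ and to $(T(a),T(b))\in\SCY\times\SCY$. Consequently, in each of the two-element sets $\set{\Vinf{a+b},\Vinf{a-b}}$ and $\set{\Vinf{T(a)+T(b)},\Vinf{T(a)-T(b)}}$ the norm of the sum is at least the norm of the difference. Comparing these sets via the phase-isometry identity forces $\Vinf{T(a)+T(b)}=\Vinf{a+b}$ and, more importantly, $\Vinf{T(a)-T(b)}=\Vinf{a-b}$ for all $a,b\in\SCX$.

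At this point $T$ is a surjective isometry from $\SCX$ onto $\SCY$, and Theorem~\ref{thm3.1} yields a homeomorphism $\si\colon\Y\to\X$ with $T(f)=f\circ\si$ for every $f\in\SCX$. The uniqueness of the real-linear isometric extension of $T$ to $\CX$ is then immediate from Corollary~\ref{cor1}. There is no genuine obstacle in the argument: the order-theoretic inequality $\Vinf{a-b}\leq\Vinf{a+b}$ collapses the two-sided choice built into the phase-isometry identity, and Theorem~\ref{thm2} emerges as a clean consequence of the main theorem.
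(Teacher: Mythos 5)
Your proposal is correct and follows essentially the same route as the paper: positivity gives $\Vinf{a-b}\leq\Vinf{a+b}$ on both sides, so taking minima in the phase-isometry identity shows $T$ is an isometry, after which Theorem~\ref{thm3.1} and Corollary~\ref{cor1} finish the argument. The paper's proof is identical in substance (it credits the reduction to \cite[Proposition~2.1]{hiro1}).
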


\begin{proof}
Let $T\colon\SCX\to\SCY$ be a surjective
phase-isometry.
We prove that $T$ is an isometry.
Although this is essentially proved in
\cite[Proposition~2.1]{hiro1},
we give its proof
for the sake of completeness.
Let $f,g\in\SCX$ and $x\in\X$ be fixed.
Because $f(x),g(x)\geq0$,
we obtain the following inequalities:
\[
|f(x)-g(x)|
\leq
|f(x)+g(x)|
\leq
\Vinf{f+g},
\]
which yields $\Vinf{f-g}\leq\Vinf{f+g}$.
By applying the same argument
to $T(f),T(g)$, we conclude that
$\Vinf{T(f)-T(g)}\leq\Vinf{T(f)+T(g)}$.
As $T$ is a phase-isometry,
the preceding inequalities show
the following result:
\begin{align*}
\Vinf{T(f)-T(g)}
&=
\min\set{\Vinf{T(f)+T(g)},\Vinf{T(f)-T(g)}}\\
&=
\min\set{\Vinf{f+g},\Vinf{f-g}}\\
&=
\Vinf{f-g}.
\end{align*}
Therefore we conclude that $T$ is
a surjective isometry.
Now we apply Theorem~\ref{thm3.1} to $T$.
Then we conclude that $T$ is a
composition operator induced by
a homeomorphism.
By Corollary~\ref{cor1},
$T$ extends uniquely to a surjective
real-linear isometry between
$\CX$ and $\CY$.
\end{proof}

\begin{remark}
In \cite{leun}, Leung, Ng and Wong present an elegant argument
for the case $S(C(K))^+$, the positive unit sphere
of $C(K)$ with $K$ compact Hausdorff space.
The authors analyze strictly positive functions
in $S(C(K))^+$ and their maximum sets.
Because $K$ is compact, there exists the constant function
$\mathds{1}\in S(C(K))^+$.
Then the maximum set,
$\MK{f}$, of $f\in S(C(K))^+$ is
that of a strictly positive function $(\mathds{1}+f)/2$.
Moreover, the zero set,
$\ZK{f}$, of $f\in S(C(K))^+$ with $\ZK{f}\neq\emptyset$ is related
to a maximum set of a strictly positive function $\mathds{1} - f/2$,
that is $\ZK{f}=\MK{\mathds{1}-f/2}$.
In this sense, strictly positive functions
and the constant function $\mathds{1}$
play essential role in their argument.
In contrast,
in the case of $\SCX$ with $X$ locally compact Hausdorff space,
it is known that the existence of strictly positive functions
in $\CX$ is equivalent to
$\X$ being $\sigma$-compact~\cite[pp. 187, Exercise~4]{Bauer01}.
In particular, the lack of assumption that
$\X$ is $\sigma$-compact obstructs
an adaptation of the compact-case argument in \cite{leun}.

The key idea of this paper is to consider all functions
in $\SCX$ along with their zero sets and
maximum sets, rather than restricting attention
to strictly positive functions with maximum sets.
In Lemma~\ref{lem3.3}, we obtain some properties
of the sets $\DX(f)$ for $f\in\SCX$
with respect to the maximum sets and zero sets.
By Lemma~\ref{lem3.4}, these properties are preserved
by surjective isometry $\Phi$.
This allows us to define a map $\si$ from $\Y$ to $\X$.
Then we can prove that $\Phi$ is a composition operator
induced by $\si$.
\end{remark}

\section*{Acknowledgment}
The second author
 is supported by the Ministry of Education, Culture, Sports, Science and Technology of Japan (MEXT), and by the Ministry of Science and Technology (MoST) of Taiwan under Grant No. 115-2917-I-110-009.
The third author partially supported by JSPS KAKENHI Grant Number JP 25K07028.
The authors are grateful to Professor Ngai-Ching Wong, a co-advisor of the second author, for his valuable suggestions and comments.


\begin{thebibliography}{99}
\bibitem{Bauer01}
H.~Bauer,
Measure and integration theory.
Transl. from the German by Robert B. Burckel.
de Gruyter Studies in Mathematics. 26.
Berlin: de Gruyter. xvi, 230 p. (2001).

\bibitem{cue1}
M.~Cueto-Avellaneda,
D.~Hirota,
T.~Miura,
A.M.~Peralta,
Exploring new solutions to Tingley's problem
for function algebras,
Quaest. Math.
46, No. 7, 1315--1346 (2023).

\bibitem{cue2}
M.~Cueto-Avellaneda,
A.M.~Peralta,
On the Mazur--Ulam property
for the space of Hilbert-space-valued
continuous functions,
J. Math. Anal. Appl.
479, No. 1, 875--902 (2019).

\bibitem{hat1}
O.~Hatori,
The Mazur--Ulam property for uniform algebras,
Stud. Math.
265, No.2, 227--239 (2022).

\bibitem{hat2}
O.~Hatori,
The Mazur--Ulam property
for a Banach space which satisfies
a separation condition,
RIMS K\^{o}ky\^{u}roku Bessatsu
B93, 29--82 (2023).

\bibitem{hat3}
O.~Hatori,
S.~Oi,
R.~Shindo Togashi,
Tingley's problems on uniform algebras,
J. Math. Anal. Appl.
503, No.2, Article ID 125346, 14 p. (2021).

\bibitem{hiro1}
D.~Hirota,
I.~Matsuzaki,
T.~Miura,
Phase-isometries between the
positive cones of the Banach space
of continuous real-valued functions,
Ann. Funct. Anal.
15, No. 4, Paper No. 77, 11 p. (2024).

\bibitem{hiro2}
D.~Hirota,
T.~Miura,
Tingley's problem for a Banach space
of Lipschitz functions
on the closed unit interval,
RIMS K\^{o}ky\^{u}roku Bessatsu
B93, 157--181 (2023).

\bibitem{ilis}
D.~Ili\v{s}evi\'{c},
M.~Omladi\v{c},
A.~Turn\v{s}ek,
Phase-isometries between normed spaces,
Linear Algebra Appl.
612, 99--111 (2021).

\bibitem{leun}
C.W.~Leung,
C.K.~Ng,
N.C.~Wong,
On a variant of Tingley's problem
for some function spaces,
J. Math. Anal. Appl.
496, No. 1, Article ID 124800, 17 p. (2021).

\bibitem{miu}
T.~Miura,
Real-linear isometries between function algebras,
Cent. Eur. J. Math.
9, No. 4, 778--788 (2011).

\bibitem{pera}
A.M.~Peralta,
A survey on Tingley’s problem for operator algebras,
Acta Sci. Math. 84, No. 1--2, 81--123 (2018).

\bibitem{sun}
L.~Sun, Y.~Sun, D.~Dai,
On phase-isometries between the positive cones
of continuous function spaces,
Ann. Funct. Anal.
14, No. 1, Paper No. 17, 12 p. (2023).

%

\bibitem{ting}
D.~Tingley,
Isometries of the unit sphere,
Geom. Dedicata,
22, No. 3, 371--378 (1987).

\bibitem{wang1}
R.~Wang,
Isometries between the unit spheres
of $C_0(\Omega)$ type spaces,
Acta Math. Sci.
14, No. 1, 82--89 (1994).

\bibitem{wang2}
R.~Wang,
Isometries of $C_0^{(n)}(X)$,
Hokkaido Math. J.
25, 465--519 (1996).

\bibitem{wang4}
R.~Wang,
A.~Orihara,
Isometries on the $\ell^1$-sum
of $C_0(\Omega,E)$ type spaces,
J. Math. Sci., Tokyo
2, No. 1, 131--154 (1995).
\end{thebibliography}
\end{document}